\theoremstyle{plain}
\newtheorem{theorem}{Theorem}[section]
\newtheorem{lemma}[theorem]{Lemma}
\newtheorem{corollary}[theorem]{Corollary}
\theoremstyle{definition}
\theoremstyle{remark}
\def\lcm{\mbox{lcm}}
\begin{document}

\title[The degree of a vertex in the power graph of a finite abelian group]{The degree of a vertex in the power graph of a finite abelian group}

\author[Amit Sehgal]{Amit Sehgal}
\address{Department of Mathematics, Govt. College, Matanhail (Jhajjar), Haryana, India}
\email{amit\_sehgal\_iit@yahoo.com}

\author[Shubh N. Singh]{Shubh N. Singh}
\address{Department of Mathematics, Central University of South Bihar, Gaya, Bihar, India}
\email{shubh@cub.ac.in}


\begin{abstract}
The power graph of a given finite group is a simple undirected graph whose vertex set is the group itself, and there is an edge between any two distinct vertices if one is a power of the other. In this paper, we find a precise formula to count the degree of a vertex in the power graph of a finite abelian group of prime-power order. By using the degree formula, we give a new proof to show that the power graph of a cyclic group of prime-power order is complete. We finally determine the degree of a vertex in the power graph of a finite abelian group.
\end{abstract}

\keywords{Power graphs; Abelian groups; Internal direct products; Linear congruences; Euler's totient function}

\subjclass[2010]{05C20, 05C25, 20K01, 20K25, 11A07}

\maketitle

\section{Introduction}
\vspace{0.1cm}

All groups considered in this paper will be non-trivial finite multiplicative groups. Throughout, $G$ shall denote a group.
By the word \lq graph\rq\  we mean a finite simple undirected graph. The interaction between groups and graphs is still a growing area of research. Kelarev and Quinn \cite{kela00} introduced the \emph{directed power graph} $\overrightarrow{\mathcal{G}}(G)$ of a group $G$ as a digraph whose vertex set is $G$, and there is an edge from vertex $u$ to the other vertex $v$ whenever $v$ is a power of $u$. Further, they gave a technical description of the structure of the directed power graphs of abelian groups. Motivated by this concept, Chakrabarty et al. \cite{chak09} defined the \emph{power graph} $\mathcal{G}(G)$ of a group $G$ as a graph with $G$ as its vertex set, and there is an edge between two distinct vertices if one is a power of the other. Note that the power graph $\mathcal{G}(G)$ is precisely the underlining graph of the directed power graph $\overrightarrow{\mathcal{G}}(G)$. The concept of power graphs of groups has caught the considerable interests of many researchers in the recent years, see for instance \cite{aba13}.

\vspace{0.1cm}
It is easy to see that the power graph of a given group is connected. Chakrabarty et al. \cite{chak09} proved that the power graph $\mathcal{G}(G)$ is complete if and only if $G$ is a cyclic group of prime-power order. They also obtained a formula for the number of edges in the power graph $\mathcal{G}(G)$. Cameron and Ghosh \cite{came11} proved that two abelian groups with isomorphic power graphs are isomorphic. Further, they proved that the only group whose automorphism group is the same as that of its power graph is the Klein $4$-group. Also, Cameron \cite{came10} proved that two groups with isomorphic power graphs have the same number of elements of each order. Curtin and Pourgholi \cite{curt14} proved that among all groups of a given order, the power graph of cyclic group of that order has the maximum size. Many other graph theoretical properties of the power graph of a group are also  investigated, see for instance \cite{came17, chel13, mira12, mogh14, pand18, panda18}. Researchers have also studied the spectra of power graphs of certain groups, see \cite{chapan15, chat17, mehr17, shubh19}.

\vspace{0.1cm}

To analyze the structure of a given graph it is important to look at the degree of each of its vertices. A formula to count the degree of a vertex in the power graph $\mathcal{G}(\mathbb{Z}_n)$ of the cyclic group $\mathbb{Z}_n$  has obtained \cite{doost15}. In this paper, we give an explicit formula to count the degree of a vertex in the power graph of an abelian group. The remainder of the paper is organized as follows. In the next section, we introduce the necessary notations, recall basic concepts of graphs, groups and number theory, and also state some crucial facts. We present our investigation on the degree of a vertex in the power graph of a group in Section 3. Finally, we summarize our investigations in Section 4.

\section{Preliminaries and Notations}
\vspace{0.1cm}

We will assume throughout that the reader is familiar with the basic notions and results concerning group, graph and number theory. The purpose of this section is to introduce relevant concepts and notations used throughout this paper. The reader is referred to \cite{bond08, burt11, foote04} for standard graph, number and group theoretic terminologies, notations and results, respectively.

\vspace{0.1cm}
The \emph{degree} of a vertex $v$ in a graph $\Gamma$ is the number of edges in the graph $\Gamma$ incident with $v$. Let $D$ be a directed graph. The \emph{in-degree} of a vertex $v$ in $D$ is the number of edges with head $v$, and the \emph{out-degree} of $v$ is the number of edges with tail $v$. A \emph{bi-directional edge} in $D$ is a pair of edges which join its two vertices in opposite directions.

\vspace{0.1cm}
Throughout this paper, let $p$ and $n$ denote a prime number and a positive integer, respectively. Let $a$ and $b$ be two positive integers. If $n$ divides $a-b$, we write $a \equiv b \bmod n$ and say that $a$ is congruent to $b$ modulo $n$. The greatest common divisor of $a$ and $b$ is denoted by $(a, b)$, and their least common multiple is denoted by $\lcm(a, b)$. If $(a, b)=1$, we say that $a$ and $b$ are relatively prime. The integers $a$ and $b$ are relatively prime if and only if there exists integers $s, t$ such that $as + bt = 1$. If $c$ is a non-zero integer such that $c$ divides $ab$ and $\gcd(c, a) = 1$, then $c$ divides $b$. Let $\phi(n)$ denote the Euler's totient function of $n$. If $\gcd(a, b) = 1$, then $\phi(ab) = \phi(a)\phi(b)$.

\vspace{0.1cm}
Let $G$ be a group. Let $e$ denote the identity element of $G$ and $|G|$ denote the order of $G$ used throughout the paper. The cyclic group of order $m$ is usually denoted by $\mathbb{Z}_m$. Let $g$ be an arbitrary element of $G$. We denote the order of $g$ by $|g|$, and the cyclic subgroup generated by $g$ by $\langle g \rangle$. Let $H$ and $K$ be two normal subgroups of $G$. If $G$ is the internal direct product of $H$ and $K$, then every element of $G$ can be uniquely expressed as the product of an element of $H$ and an element of $K$. Moreover, every element of $H$ commutes with every element of $K$. If $G$ is an abelian group of order $p_1^{m_1} p_2^{m_2}\cdots p_k^{m_k}$ then, for each $i\;(1\le i \le k)$, the set $G(p_i) = \{x\in G\;|\; x^{p_i^{m_i}} = e\}$ forms a normal subgroup of order $p_i^{m_i}$ in $G$.

\vspace{0.1cm}
We end this section with the following well-known theorem on finite abelian groups.
\begin{theorem}\label{idp-gp}
Let $G$ be an abelian group of order $n$ and let $p_1^{m_1} p_2^{m_2}\cdots p_k^{m_k}$ be the prime decomposition of $n$.
Then $G$ is the internal direct product of the normal subgroups $G(p_i)$ of order $p_i^{m_i}$, that is, \[G \cong G(p_1)\times G(p_2)\times \cdots \times G(p_k).\]
\end{theorem}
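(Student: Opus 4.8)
The plan is to show that the multiplication map
\[
\psi \colon G(p_1) \times G(p_2) \times \cdots \times G(p_k) \longrightarrow G, \qquad (x_1, x_2, \ldots, x_k) \longmapsto x_1 x_2 \cdots x_k,
\]
is a group isomorphism; establishing this is precisely what it means for $G$ to be the internal direct product of the subgroups $G(p_i)$. First I would check that $\psi$ is a homomorphism: since $G$ is abelian, any two elements lying in distinct factors commute, so for tuples $(x_1, \ldots, x_k)$ and $(y_1, \ldots, y_k)$ we have $\psi\big((x_1 y_1, \ldots, x_k y_k)\big) = \prod_{i=1}^k x_i y_i = \big(\prod_{i=1}^k x_i\big)\big(\prod_{i=1}^k y_i\big)$, which is the product of the two images. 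Because each $G(p_i)$ is a subgroup of order $p_i^{m_i}$ (as recorded in the preliminaries), the domain of $\psi$ has order $\prod_{i=1}^k p_i^{m_i} = n = |G|$. It therefore suffices to prove that $\psi$ is surjective: a surjection between finite sets of equal cardinality is automatically a bijection, so $\psi$ will be an isomorphism.

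For surjectivity I would use a coprimality argument. Set $n_i = n / p_i^{m_i}$ for each $i$. The integers $n_1, \ldots, n_k$ share no common prime factor, so $\gcd(n_1, \ldots, n_k) = 1$, and by a repeated application of B\'ezout's identity there exist integers $a_1, \ldots, a_k$ with $a_1 n_1 + \cdots + a_k n_k = 1$. Now fix $g \in G$. By Lagrange's theorem $|g|$ divides $|G| = n$, so $g^n = e$. Writing $g = g^{\,a_1 n_1 + \cdots + a_k n_k} = \prod_{i=1}^k g^{\,a_i n_i}$ and verifying that each factor sits in the correct primary component — indeed $\big(g^{\,a_i n_i}\big)^{p_i^{m_i}} = g^{\,a_i n} = (g^n)^{a_i} = e$, so $g^{\,a_i n_i} \in G(p_i)$ — exhibits $g$ as a value of $\psi$. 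Hence $\psi$ is surjective, and the theorem follows.

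The argument is essentially routine; the one point genuinely requiring care is the surjectivity step, where the coprimality of the cofactors $n_i$ is exactly what allows B\'ezout's identity to split an arbitrary $g$ across the primary components $G(p_i)$. Alternatively, one could avoid the counting shortcut and verify the internal-direct-product conditions by hand: the factorization above already gives $G = G(p_1)\cdots G(p_k)$, while the trivial-intersection condition follows by noting that any element of $G(p_i) \cap \prod_{j \neq i} G(p_j)$ has order dividing both $p_i^{m_i}$ and $n_i$, hence order $1$. Finally, I would flag that the equality $|G(p_i)| = p_i^{m_i}$, used above to match cardinalities, is itself a nontrivial input; but it is supplied as a standing fact in the preliminaries and so may be invoked freely here.
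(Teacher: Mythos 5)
The paper does not prove this statement at all: it is introduced with the sentence ``We end this section with the following well-known theorem on finite abelian groups'' and invoked as a standing fact, so there is no proof of record to compare yours against. That said, your argument is correct and is the standard one. The homomorphism property of the multiplication map $\psi$ follows from commutativity of $G$; the cardinality count relies on $|G(p_i)| = p_i^{m_i}$, which the paper explicitly supplies in its preliminaries (and which you rightly flag as a nontrivial input rather than something you re-derive); and the surjectivity step via B\'ezout applied to the cofactors $n_i = n/p_i^{m_i}$ is sound, since no prime can divide every $n_i$ and the verification $\big(g^{a_i n_i}\big)^{p_i^{m_i}} = (g^n)^{a_i} = e$ correctly places each factor in its primary component. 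Your closing remark giving the trivial-intersection condition directly (an element of $G(p_i) \cap \prod_{j\neq i} G(p_j)$ has order dividing both $p_i^{m_i}$ and $n_i$, hence is the identity) is also correct and makes the internal-direct-product claim, not merely the abstract isomorphism, explicit --- which is in fact the form the paper actually uses later in Theorem 3.6. In short: a complete and correct proof of a result the paper chose to cite rather than prove.
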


\section{Main Results}

\vspace{0.1cm}

In this section, we present our main results on the degree of a group element in the power graph $\mathcal{G}(G)$ of a group $G$. It is easy to see the identity element of $G$ is adjacent to each non-identity group element in $\mathcal{G}(G)$ and so the degree of $e$ in $\mathcal{G}(G)$ is always $|G|-1$.

\vspace{0.1cm}
Let $g$ be a non-identity element of a group $G$. To calculate the degree of $g$ in power graph $\mathcal{G}(G)$, we surely add the in-degree and the out-degree of $g$ and then subtract the number of bi-directional edges incident on $g$ in the directed power graph $\overrightarrow{\mathcal{G}}(G)$.  In order to simplify our notations, we shall write $d_G^+(g)$, $d_G^-(g)$ and $d_G^{\pm}(g)$ respectively to denote the out-degree of $g$, the in-degree of $g$ and the number of bi-directional edges incident on $g$ in the digraph $\overrightarrow{\mathcal{G}}(G)$. By the symbol $\lq d_G(g) \rq$ we mean the degree of $g$ in the power graph $\mathcal{G}(G)$.

\vspace{0.1cm}
By the definition of directed power graph $\overrightarrow{\mathcal{G}}(G)$, it is very easy to check that $d_G^+(g) = |\langle g\rangle| - 1 = |g| - 1$ and $d_G^{\pm}(g) = \phi(|g|)-1$. We thus precisely obtain $d_G(g) = |g| - \phi(|g|) + d_G^-(g)$. To determine the degree $d_G(g)$ of a non-identity group element $g$, it is therefore sufficient to count the in-degree $d_G^-(g)$. However, it is easy see that \[d_G^-(g) = |\{h\in G\;|\; g\neq h \mbox{ and } g  \in \langle h \rangle\}|.\]

We start our investigation on the in-degree $d_G^-(g)$ of a non-identity group element $g$ of $G$. We first present the following elementary lemma for completeness.

\begin{lemma}\label{power-order}
Let $G$ be a group and let $g\in G$ be an element of $G$.
If $|g| = p^m$ and $|g^k|= p^r$, then $k = k'p^{m-r}$ for some integer $k'$ with $(k', p) = 1$.
\end{lemma}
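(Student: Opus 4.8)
The plan is to reduce everything to the standard formula for the order of a power of an element, namely that if $|g| = n$ then $|g^k| = n/(n,k)$. Applying this with $n = p^m$ gives $p^r = |g^k| = p^m/(p^m,k)$, so that $(p^m,k) = p^{m-r}$. The whole lemma then becomes a statement about the exact power of $p$ dividing $k$: I must show that $p^{m-r}$ divides $k$ while $p^{m-r+1}$ does not.

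First I would record (or quickly prove) the order formula $|g^k| = |g|/(|g|,k)$, since it is the only nontrivial input. From $(p^m,k) = p^{m-r}$ it is immediate that $p^{m-r} \mid k$, so we may write $k = k'p^{m-r}$ for some integer $k'$; the content of the lemma is then the coprimality $(k',p)=1$. To establish this I would argue by contradiction: if $p \mid k'$, then $p^{m-r+1} \mid k$, and since we are in the case $r \ge 1$ we have $m-r+1 \le m$, so $p^{m-r+1}$ is a common divisor of $p^m$ and $k$. This forces $(p^m,k) \ge p^{m-r+1}$, contradicting $(p^m,k) = p^{m-r}$. Hence $p \nmid k'$, that is $(k',p)=1$.

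A cleaner, self-contained alternative that avoids quoting the order formula is to write $k = p^s t$ with $(t,p)=1$ and $s$ the exact power of $p$ dividing $k$, then compute $|g^k| = |(g^{p^s})^t| = |g^{p^s}| = p^{m-\min(m,s)}$, using that raising to an exponent coprime to the order preserves the order. Reading off $\min(m,s) = m-r$ and taking $k' = t$ gives the claim directly for $r \ge 1$. I expect the valuation bookkeeping -- guaranteeing that the power of $p$ dividing $k$ is \emph{exactly} $m-r$ rather than merely at least $m-r$ -- to be the main point to get right, together with the degenerate case $r=0$, where $g^k = e$ and $(p^m,k)=p^m$ only forces $p^m \mid k$; in that case the factorization $k=k'p^m$ need not satisfy $(k',p)=1$, so this case should be excluded or handled by taking $k$ reduced modulo $p^m$, which is precisely the situation arising in the intended application to $d_G^-(g)$.
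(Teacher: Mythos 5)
Your proof is correct and takes essentially the same route as the paper: both apply the order formula $|g^k| = |g|/(|g|,k)$ to obtain $(p^m,k) = p^{m-r}$ and then extract the factor $p^{m-r}$ from $k$. You are more careful than the paper on the one point it dismisses as obvious, namely the coprimality $(k',p)=1$, and your observation about the degenerate case $r=0$ is a genuine (if minor) defect in the lemma as stated --- for $|g|=p$ and $k=p^2$ one gets $k'=p$ --- though it is harmless in the paper's applications, where the lemma is only invoked with $g^k$ a non-identity element, so $r\ge 1$.
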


\begin{proof}
We know $|g^k|= \frac{|g|}{(|g|, k)}$ which follows that
\[p^r = \frac{p^m}{(p^m, k)} \Longrightarrow (p^m, k) = p^{m-r} \Longrightarrow k = k'p^{m-r}\]
for some integer $k'$ where obviously $(k', p) = 1$, as required. This completes the proof. 
\end{proof}

\begin{theorem}\label{abe-p-grp}
Let $G = \langle x_1\rangle\times \langle x_2\rangle \times \cdots\times \langle x_n\rangle$ be an abelian $p$-group where $|x_r| = p^{m_r}$ and $1\le m_1 \leq m_2 \leq \cdots \leq m_n$. If $g = \displaystyle\prod_{\alpha=1}^{n} {x_\alpha}^{i_\alpha}$ is a non-identity element of $G$ and $|{x_\alpha}^{i_\alpha}|=p^{t_\alpha}$, then
\[d_G^-(g)=-1+\phi({|g|})\displaystyle  \sum_{\beta=0}^{\min\{m_{k+1}-t_{k+1},\ldots,m_n-t_n\}}p^{\Big(\displaystyle\sum_{j=1}^{n} \min\{m_j, \beta\}\Big)},\]
where $k$ is the smallest non-negative integer such that $|{x_{k+1}}^{i_{k+1}}| \neq 1$.
\end{theorem}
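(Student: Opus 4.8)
The plan is to obtain the in-degree from the single count
\[ N := |\{h \in G : g \in \langle h\rangle\}|, \]
after which $d_G^-(g) = N - 1$, since exactly one such $h$, namely $h = g$, is excluded by the requirement $h \neq g$. Write $|g| = p^T$, so that $T = \max_\alpha t_\alpha \ge 1$ and $\phi(|g|) = \phi(p^T)$. The first step is to partition the elements counted by $N$ by their order: if $g \in \langle h\rangle$ then $\langle g\rangle \le \langle h\rangle$, so $|h| = p^{T+\beta}$ for some integer $\beta \ge 0$, and writing $g = h^s$ one has $(s, p^{T+\beta}) = p^\beta$, i.e. $s = u\,p^\beta$ with $(u,p) = 1$. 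Thus $N = \sum_{\beta \ge 0} A_{T+\beta}$, where $A_{T+\beta}$ is the number of $h$ of order $p^{T+\beta}$ with $g \in \langle h\rangle$.

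The key step is to reduce each $A_{T+\beta}$ to a root count. Set $R_{\beta} := |\{h \in G : h^{p^\beta} = g\}|$; note that, since $g \neq e$, the equation $h^{p^\beta} = g$ already forces $|h| = p^{T+\beta}$, so an order condition is automatic. I would then prove the identity $A_{T+\beta} = \phi(p^T)\,R_{\beta}$ by double counting the pairs $(h,u)$ with $|h| = p^{T+\beta}$, $0 \le u < p^T$, $(u,p) = 1$ and $h^{u p^\beta} = g$. Counting by $h$ yields $A_{T+\beta}$, because for a fixed admissible $h$ the exponent $u$ is uniquely determined modulo $p^T$; counting by $u$ yields $\phi(p^T)\,R_{\beta}$, because for each fixed $u$ the map $h \mapsto h^u$ permutes the elements of order $p^{T+\beta}$ (as $(u,p)=1$) and sends the condition $h^{u p^\beta} = g$ to $(h^u)^{p^\beta} = g$.

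Next I would compute $R_\beta$ explicitly using the internal direct product structure of Theorem \ref{idp-gp}. Writing $h = \prod_\alpha x_\alpha^{j_\alpha}$, the equation $h^{p^\beta} = g$ is equivalent to the system of linear congruences $p^\beta j_\alpha \equiv i_\alpha \pmod{p^{m_\alpha}}$, one per index $\alpha$. The $\alpha$-th congruence is solvable if and only if $p^{\min\{m_\alpha,\beta\}}$ divides $i_\alpha$, in which case it has exactly $p^{\min\{m_\alpha,\beta\}}$ solutions $j_\alpha$ in $\{0,\dots,p^{m_\alpha}-1\}$; hence $R_\beta = p^{\sum_{j=1}^n \min\{m_j,\beta\}}$ when the whole system is solvable and $R_\beta = 0$ otherwise.

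Finally I would pin down the range of $\beta$. By Lemma \ref{power-order}, if $t_\alpha \ge 1$ then $i_\alpha = i_\alpha'\,p^{m_\alpha - t_\alpha}$ with $(i_\alpha', p) = 1$, so the $\alpha$-th congruence is solvable exactly when $\beta \le m_\alpha - t_\alpha$, whereas the indices with $t_\alpha = 0$ give $i_\alpha \equiv 0$ and impose no constraint. The system is therefore solvable precisely for $0 \le \beta \le \min_{\alpha:\,t_\alpha \ge 1}\{m_\alpha - t_\alpha\}$, and a short check using $m_1 \le \cdots \le m_n$ and the definition of $k$ identifies this with the stated bound $M = \min\{m_{k+1}-t_{k+1},\ldots,m_n-t_n\}$: any index $\alpha > k$ with $t_\alpha = 0$ satisfies $m_\alpha - t_\alpha = m_\alpha \ge m_{k+1} > m_{k+1} - t_{k+1} \ge M$, so it does not lower the minimum. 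Assembling the pieces gives $N = \phi(p^T)\sum_{\beta=0}^{M} p^{\sum_{j=1}^n \min\{m_j,\beta\}}$, and $d_G^-(g) = N - 1$ is exactly the claimed formula. I expect the middle reduction $A_{T+\beta} = \phi(p^T)\,R_\beta$ to be the main obstacle: producing the factor $\phi(|g|)$ correctly rests on the power-map bijection $h \mapsto h^u$ together with the "$u$ is unique modulo $p^T$" bookkeeping, which is more delicate than the essentially mechanical congruence count in the third step.
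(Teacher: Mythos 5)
Your proof is correct and follows essentially the same route as the paper's: stratify the elements $h$ with $g \in \langle h\rangle$ by $|h| = p^{T+\beta}$, extract the factor $\phi(|g|)$ from the unit part of the exponent (via Lemma \ref{power-order}), and count the remaining choices component-by-component as solutions of the linear congruences $p^{\beta} j_\alpha \equiv i_\alpha \bmod p^{m_\alpha}$, with the same identification of the admissible range of $\beta$. Your explicit double-counting identity $A_{T+\beta} = \phi(p^{T})\,R_\beta$, justified by the bijection $h \mapsto h^{u}$ and the uniqueness of $u$ modulo $p^{T}$, is a cleaner packaging of the step the paper carries out by fixing $\gamma_1$ and counting the exponents $c_\alpha$, and it makes the ``each $z$ is counted exactly once'' bookkeeping more transparent than in the original.
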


\begin{proof}
Obviously $0\le t_\alpha \leq m_\alpha$ and $|g|=\lcm(p^{t_1},p^{t_2},\ldots,p^{t_n})=p^{t_w}$ where $t_w=\max\{t_1,t_2,\ldots,t_n\}$.
Let $z=\displaystyle\prod_{\alpha=1}^{n} {x_\alpha}^{c_\alpha}$ be an arbitrary element of $G$ such that $z^{\gamma} = g$ for some positive integer $\gamma$ and let $|{x_\alpha}^{c_\alpha}|=p^{s_\alpha}$. Then $|z|=p^{t_w+\beta}$ for some non-negative integer $\beta$. By using Lemma \ref{power-order} we have that $\gamma=\gamma_1 p^{\beta}$ where $\gamma_1$ is a positive integer with $(\gamma_1,p)=1$. Note that the value of $\gamma_1$ uniquely determine the value of $\gamma$ and so the number of possible values of $\gamma_1$ and $\gamma$ are same.


\vspace{0.1cm}
For each $\alpha$, we clearly have that $(x_\alpha^{c_\alpha})^{ \gamma_1 p^{\beta}}={x_\alpha}^{i_\alpha}$ and so
\[|(x_\alpha^{c_\alpha})^{ \gamma_1 p^{\beta}}|=|{x_\alpha}^{i_\alpha}|\Longrightarrow \frac{p^{s_\alpha}}{(p^{s_\alpha},\gamma_1 p^{\beta})}=p^{t_\alpha} \Longrightarrow s_\alpha=t_\alpha+\min\{s_\alpha, \beta\}.\]

In particular, $s_{k+1} = t_{k+1} + \min\{s_{k+1}, \beta\}$ where $k$ is the smallest non-negative integer such that $t_{k+1} \neq 0$. This obviously gives $\beta \le m_{k+1} - t_{k+1}$. As $1 \leq m_1 \leq \cdots \leq m_n$, it is easy to check that $0 \le \beta \leq \min\{m_{k+1}-t_{k+1},\ldots,m_n-t_n\}$.


\vspace{0.1cm}
We now consider, for an arbitrary fixed $\alpha$, the following two cases.

\vspace{0.1cm}
\textbf{Case 1.} If $t_\alpha=0$, then $i_\alpha \equiv 0 \bmod \; p^{m_\alpha}$.
Since $z^{\gamma_1 p^{\beta}}= g$ with $(\gamma_1, p)=1$, we have that \[{x_\alpha}^{c_\alpha \gamma_1 p^{\beta}}={x_\alpha}^{{i_\alpha}} \Longrightarrow c_\alpha \gamma_1 p^{\beta} \equiv i_\alpha \bmod p^{m_\alpha} \Longrightarrow c_\alpha p^{\beta} \equiv 0 \bmod p^{m_\alpha}.\]

If $\beta \ge m_\alpha$, then $c_\alpha p^{\beta} \equiv 0 \bmod p^{m_\alpha}$ for all $c_\alpha$ and consequently the total number of possible ways of choosing $c_\alpha$ in a complete residue system modulo $p^{m_\alpha}$ is $p^{m_\alpha}$.

\vspace{0.1cm}
If $\beta < m_\alpha$, then $c_\alpha$ is divisible by $p^{m_\alpha-\beta}$.
We know that the number of integers divisible by $p^{m_\alpha-\beta}$ in a complete residue system modulo $p^{m_\alpha}$ is $p^{\beta}$ and consequently the total number of possible ways of choosing $c_\alpha$ is $p^{\beta}$.

\vspace{0.1cm}

Thus, in this case, the total number of possible ways of choosing $c_\alpha$ is $p^{\min\{m_\alpha,\beta\}}$.

\vspace{0.1cm}
\textbf{Case 2.} If $t_\alpha \neq 0$, recall that $s_\alpha=t_\alpha+\min\{s_\alpha, \beta\}$ which certainly gives  $s_\alpha=t_\alpha+\beta$ and so $\beta \le m_\alpha - t_\alpha$. Given that $|{x_\alpha}^{c_\alpha}| = p^{s_\alpha}$, by using Lemma \ref{power-order} we obtain $c_\alpha=d_\alpha p^{m_\alpha-t_\alpha-\beta}$ where $d_\alpha$ is a positive integer with $(d_\alpha,p)=1$. Also  $|{x_\alpha}^{i_\alpha}|= p^{t_\alpha}$, by using Lemma \ref{power-order} we obtain
$i_\alpha=j_\alpha p^{m_\alpha-t_\alpha}$ where $j_\alpha$ is a positive integer with $(j_\alpha,p)=1$. As ${x_\alpha}^{c_\alpha \gamma_1 p^{\beta}}={x_\alpha}^{i_\alpha}$ which follows that \[{x_\alpha}^{(d_\alpha p^{m_\alpha-t_\alpha-\beta}) \gamma_1 p^{\beta}}={x_\alpha}^{j_\alpha p^{m_\alpha-t_\alpha}}\Longrightarrow d_\alpha \gamma_1\equiv j_\alpha \bmod p^{t_\alpha}.\]
and subsequently $d_w \gamma_1\equiv j_w \bmod p^{t_w}$. As $(\gamma_1, p) = 1$, the total number of possible values of $\gamma_1$ is $\phi(p^{t_w})=\phi(|g|)$.

\vspace{0.1cm}
Recall that $(\gamma_1, p)=1$, there exists an integer $\delta_1$ with $(\delta_1, p) = 1$ such that $\gamma_1 \delta_1 \equiv 1 \bmod p^{t_\alpha}$.
Since $j_\alpha \equiv d_\alpha \gamma_1 \bmod p^{t_\alpha}$, multiplying both sides by $\delta_1$ we get
\[j_\alpha \delta_1\equiv d_\alpha \gamma_1 \delta_1\bmod p^{t_\alpha}\Longrightarrow j_\alpha \delta_1\equiv d_\alpha \bmod\; p^{t_\alpha}.\]

Now $c_\alpha = d_\alpha p^{m_\alpha-t_\alpha-\beta}$ where $(d_\alpha, p)=1$, choose $d_\alpha$ from a reduced residue system modulo $p^{t_\alpha+\beta}$ such that $d_\alpha \equiv \delta_1 j_\alpha \bmod p^{t_\alpha}$. Then the total number of possible values of $c_\alpha$ is $\frac{\phi(p^{t_\alpha+\beta})}{\phi(p^{t_\alpha})}=p^{\beta}$.

\vspace{0.1cm}
Hence, the total number of possible values of $c_\alpha$ is $p^{\min\{m_\alpha,\beta\}}$ or $p^{\beta}$ accordingly $t_\alpha = 0$ or $t_\alpha\neq 0$.
Recall that $0 \le \beta \le\min\{m_{k+1}-t_{k+1},\ldots,m_n-t_n\})$ and the total number of possible values of $\gamma$ is $\phi(|g|)$. Since we are not taking $z = g$ into consideration, we consequently have that
\[d_G^-(g)=-1 + \phi({|g|})\displaystyle  \sum_{\beta=0}^{\min\{m_{k+1}-t_{k+1},\ldots,m_n-t_n\}}p^{\Big(\displaystyle\sum_{j=1}^{n} \min\{m_j,\beta\}\Big)}.\] 
\end{proof}

If $G$ is a cyclic group of prime-power order, the following corollary of Theorem \ref{abe-p-grp} shows that the power graph $\mathcal{G}(G)$ is complete.

\begin{corollary}
If $G$ is a cyclic $p$-group, then the power graph $\mathcal{G}(G)$ is complete.
\end{corollary}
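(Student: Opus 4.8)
The plan is to derive the statement as a direct specialization of Theorem~\ref{abe-p-grp} to the case $n=1$. Recall that a simple graph on $N$ vertices is complete precisely when every vertex has degree $N-1$; since we have already observed that the identity element has degree $|G|-1$, it suffices to show that every non-identity vertex $g$ also satisfies $d_G(g) = |G|-1$.

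First I would write $G = \langle x_1\rangle$ with $|x_1| = p^{m}$, so that $|G| = p^{m}$ and $n=1$. For a non-identity element $g = x_1^{i_1}$ we have $|g| = p^{t_1}$ with $1 \le t_1 \le m$; in particular $|x_1^{i_1}| \neq 1$, so the index $k$ of Theorem~\ref{abe-p-grp} equals $0$. With $n=1$ the multi-index sum in the theorem collapses: the upper limit is simply $m - t_1$, and for $0 \le \beta \le m - t_1 \le m$ the single-term exponent reduces to $\sum_{j=1}^{1}\min\{m,\beta\} = \beta$. Thus $d_G^-(g)$ becomes $-1 + \phi(p^{t_1})\sum_{\beta=0}^{m-t_1} p^{\beta}$, a plain geometric series.

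The next step is the arithmetic. Using $\phi(p^{t_1}) = p^{t_1-1}(p-1)$ together with $\sum_{\beta=0}^{m-t_1} p^{\beta} = \frac{p^{m-t_1+1}-1}{p-1}$, the factor $p-1$ cancels and the product telescopes to $d_G^-(g) = p^{m} - p^{t_1-1} - 1$. Substituting this into the identity $d_G(g) = |g| - \phi(|g|) + d_G^-(g)$ recorded just before Lemma~\ref{power-order}, and noting $|g| - \phi(|g|) = p^{t_1} - p^{t_1-1}(p-1) = p^{t_1-1}$, the two occurrences of $p^{t_1-1}$ cancel and leave $d_G(g) = p^{m} - 1 = |G| - 1$.

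Since every vertex---identity and non-identity alike---then has degree $|G|-1$, the power graph $\mathcal{G}(G)$ is complete. I do not expect any genuine obstacle here: the entire argument is a clean specialization, and the only point requiring care is verifying that the $\min$-terms simplify as claimed and that the geometric-series evaluation telescopes correctly, so that the two $p^{t_1-1}$ contributions cancel exactly.
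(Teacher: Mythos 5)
Your proposal is correct and follows essentially the same route as the paper: specialize Theorem~\ref{abe-p-grp} to the cyclic case to get $d_G^-(g) = p^m - p^{t_1-1} - 1$ via the geometric series, then substitute into $d_G(g) = |g| - \phi(|g|) + d_G^-(g)$ to obtain $p^m - 1$. The only difference is that you spell out the collapse of the multi-index sum for $n=1$ in more detail, which the paper leaves implicit.
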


\begin{proof}
Assume $|G| = p^m$ for some positive integer $m$. Clearly the degree of identity element in $\mathcal{G}(G)$ is $p^m - 1$. Let $g$ be an arbitrary non-identity element of $G$. Then $|g| = p^k$ for some positive integer $k$ with $1\le k \le m$. By using Theorem \ref{abe-p-grp}, we obtain $d_G^-(g)= -1 + \phi(p^{k})\displaystyle\sum_{\beta=0}^{m-k} p^{\beta} = p^m - p^{k-1} -1$. Thus \[d_G(g) = |g| - \phi(|g|) + d_G^-(g) = p^k - (p^k - p^{k-1}) + p^m - p^{k-1} - 1 = p^m - 1,\] as required. This completes the proof. 
\end{proof}

If $G$ is the internal direct product of its normal subgroups $H$ and $K$ whose orders are relatively prime, then we obtain the following crucial Theorem.

\begin{theorem}\label{co-groups}
Let $G$ be a group and let $H$ and $K$ be two normal subgroups of $G$ such that $(|H|,|K|) = 1$. If $G$ is the internal direct product of the subgroups $H$ and $K$, then for an element $z = xy$ of the group $G$ where $x\in H$ and $y \in K$,
\begin{enumerate}
\item[\rm(i)] $d_G^+(z) = \Big(d_H^+(x)+1\Big)\Big(d_K^+(y)+1\Big)-1$.
\item[\rm(ii)] $d_G^{\pm}(z) = \Big(d_H^{\pm}(x)+1\Big)\Big(d_K^{\pm}(y)+1\Big)-1$.
\item[\rm(iii)] $d_G^-(z) = \Big(d_H^-(x)+1\Big)\Big(d_K^-(y)+1\Big)-1$.
\end{enumerate}
\end{theorem}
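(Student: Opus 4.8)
The plan is to reduce all three identities to cardinalities of sets that factor through the internal direct product $G = H \times K$. First I would record the structural facts that drive everything: since $x \in H$ and $y \in K$ commute, while $(|H|,|K|)=1$ forces $(|x|,|y|)=1$, the order of $z = xy$ is $|z| = |x|\,|y|$, the cyclic subgroup $\langle z\rangle$ equals the internal product $\langle x\rangle\langle y\rangle$, and in particular $x,y \in \langle z\rangle$ (because $z^{|y|}=x^{|y|}$ is a generator of $\langle x\rangle$ and symmetrically for $y$). These facts, together with the uniqueness of the decomposition $w = w_1 w_2$ with $w_1 \in H$ and $w_2 \in K$, are the only tools required.

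Parts (i) and (ii) I expect to be immediate from the formulas already recorded in the excerpt. For (i), using $d_G^+(g) = |g|-1$ throughout gives $(d_H^+(x)+1)(d_K^+(y)+1) = |x|\,|y| = |z| = d_G^+(z)+1$. For (ii), using $d_G^{\pm}(g) = \phi(|g|)-1$ and the multiplicativity $\phi(|x|\,|y|) = \phi(|x|)\phi(|y|)$, valid since $(|x|,|y|)=1$, gives $(d_H^{\pm}(x)+1)(d_K^{\pm}(y)+1) = \phi(|x|)\phi(|y|) = \phi(|z|) = d_G^{\pm}(z)+1$.

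The substance is in (iii), and here I would argue by an explicit bijection rather than a counting formula. Writing $d_G^-(g)+1 = |\{w\in G : g\in\langle w\rangle\}|$, I would show that the map $(w_1,w_2)\mapsto w_1 w_2$ is a bijection from $\{w_1\in H : x\in\langle w_1\rangle\}\times\{w_2\in K : y\in\langle w_2\rangle\}$ onto $\{w\in G : z\in\langle w\rangle\}$. Well-definedness of the map is where the structural facts re-enter: if $x\in\langle w_1\rangle$ and $y\in\langle w_2\rangle$ then, since $w_1 \in H$ and $w_2 \in K$ commute with coprime orders, $\langle w_1 w_2\rangle = \langle w_1\rangle\langle w_2\rangle$ contains both $x$ and $y$, hence contains $z = xy$. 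Injectivity is simply the uniqueness of the $H$-$K$ decomposition. For surjectivity, given $w$ with $z\in\langle w\rangle$, I would write $z = w^j$ and decompose $w = w_1 w_2$; since $w_1, w_2$ commute, $z = w_1^{\,j} w_2^{\,j}$ with $w_1^{\,j}\in H$ and $w_2^{\,j}\in K$, so uniqueness forces $x = w_1^{\,j}$ and $y = w_2^{\,j}$, placing $x\in\langle w_1\rangle$ and $y\in\langle w_2\rangle$. Counting then yields $d_G^-(z)+1 = (d_H^-(x)+1)(d_K^-(y)+1)$.

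The main obstacle I anticipate is making the well-definedness and surjectivity steps of (iii) fully rigorous: both rest on the fact that for commuting elements of coprime order the cyclic subgroup generated by a product splits as the product of the individual cyclic subgroups and contains each factor. Once that fact is isolated and applied, the uniqueness of the internal-direct-product decomposition does the rest, and the argument is clean.
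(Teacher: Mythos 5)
Your proposal is correct, and parts (i) and (ii) coincide exactly with the paper's argument. For part (iii) the underlying counting principle is the same as the paper's --- both reduce to the equivalence $z \in \langle z_1\rangle$ if and only if $x \in \langle x_1\rangle$ and $y \in \langle y_1\rangle$, where $z_1 = x_1 y_1$ is the $H$-$K$ decomposition, and both invoke uniqueness of that decomposition to count --- but the two writeups establish the harder direction differently. The paper proves the two implications as a pair of inequalities and, for the ``$\Leftarrow$'' direction, runs an explicit B\'ezout computation: from $m|H| + r|K| = 1$ it manufactures a single exponent $s+\alpha = t+\beta$ with $x_1^{s+\alpha} = x$ and $y_1^{t+\beta} = y$ simultaneously, so that $(x_1y_1)^{s+\alpha} = xy$. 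You instead isolate the structural fact that for commuting $w_1, w_2$ of coprime orders one has $\langle w_1 w_2\rangle = \langle w_1\rangle\langle w_2\rangle \supseteq \langle w_1\rangle \cup \langle w_2\rangle$, and then conclude $z = xy \in \langle w_1 w_2\rangle$ by closure; this buys a cleaner argument that avoids juggling two exponents, and packaging the count as an explicit bijection $(w_1,w_2)\mapsto w_1w_2$ is tidier than the paper's somewhat awkward phrasing about ``considering the cases when $x_1 = x$ or $y_1 = y$ but not both.'' Do make sure to prove your structural fact (e.g.\ $(w_1w_2)^{|w_2|} = w_1^{|w_2|}$ generates $\langle w_1\rangle$ since $(|w_2|,|w_1|)=1$), since it silently contains the same B\'ezout content the paper writes out; with that lemma in place your argument is complete.
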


\begin{proof}
Given $(|H|,|K|) = 1$, it is easy to check that $( |x|,|y| ) = 1$ and so $|z|=\lcm(|x|, |y|) = |x| |y|$.
\begin{enumerate}
\item[\rm(i)] Clearly $d_H^+(x) = |x|-1$ and $d_K^+(y) = |y|-1$. Hence \[d_G^+(z) = |z|-1 = |x| |y|-1 = (d_H^+(x)+1)(d_K^+(y)+1)-1.\]

\item[\rm(ii)] Clearly $d_H^{\pm}(x)=\phi(|x|)-1$ and  $d_K^{\pm}(y)=\phi(|y|)-1$. Since $(|x|, |y|) = 1$, exploiting multiplicative nature
of the Euler's totient function, we obtain \[d_G^{\pm}(z) = \phi(|x||y|) - 1= \phi(|x|) \phi(|y|) - 1 = (d_H^{\pm}(x) +1) (d_K^{\pm}(y) +1) - 1.\]

\item[\rm(iii)] Let $z_1 \in G$ be an arbitrary element such that $z\neq z_1$ and $z\in \langle z_1\rangle$. Then $z_1 = x_1 y_1$ where $x_1 \in H$ and $y_1 \in K$. Since $z\in \langle z_1\rangle$, it follows that $z = z_1^k$ for some positive integer $k$.
This gives $xy = (x_1 y_1)^k$. From the internal direct product property we get
$xy = x_1^k y_1^k \Longrightarrow x_1^{-k} x =  y_1^k y^{-1} \Longrightarrow x_1^k  = x\;\;\mbox{ and }\;\; y_1^k = y$. Since we consider the cases when $x_1 = x$ or $y_1 = y$, but not the case when $x_1 = x$ and $y_1 = y$ simultaneously, it follows that \[d_G^-(z) \leq (d_H^-(x)+1)(d_K^-(y)+1)-1.\]

Conversely, let $x \in \langle x_1 \rangle$ and $y \in \langle y_1 \rangle$ where $x_1 \in H$  and $y_1 \in K$. Then there exist positive integers $s$ and $t$  such that $x_1^s = x$ and $y_1^t = y$. Given that $(|H|,|K|)=1$, there exist integers $m$ and $r$ such that $m|H|+r|K|=1$. We now consider the integer
\begin{equation*}
\begin{split}
s-t & = s(m|H| + r|K|)-t (m|H| + r|K|)\\
& = m s|H| + r s|K|- m t|H| - r t|K|\\
& =  r |K| (s - t) - m |H| (t- s)\\
& = \beta - \alpha,
\end{split}
\end{equation*}
where $\alpha = m|H|(t-s)$ and $\beta  = r |K| (s-t)$. It follows that $s + \alpha  = t + \beta$. Observe that
 $x_1^{s + \alpha} = x_1^s x_1^{\alpha} = x_1^s x_1^{m|H|(t -s)} = x_1^s (x_1^{|H|})^{m(t-s)} = x_1^s = x$, and
$y_1^{t + \beta} =y_1^t y_1^{\beta}= y_1^t y_1^{r |K| (s-t)} = y_1^t (y_1^{|K|})^{r(s-t)} = y_1^t = y$. From the commutative property of elements in the internal direct product of $H$ and $K$, we thus obtain
\[(x_1 y_1)^{s+\alpha}=x_1^{s+\alpha} y_1^{s+\alpha}= x_1^{s+\alpha} y_1^{t+\beta} = xy,\]
as $s+\alpha = t + \beta$. We here consider the cases when $x_1 = x$ or $y_1 = y$, but not the case when $x_1 = x$ and $y_1 = y$ simultaneously, it certainly follows that  $d_G^-(z) \geq (d_H^-(x)+1)(d_K^-(y)+1)-1$.
Combining above obtained two inequalities, we finally have that \[d_G^-(z) = (d_H^-(x)+1)(d_K^-(y)+1)-1.\] 
\end{enumerate}
\end{proof}

As a consequence of Theorem \ref{co-groups} and an induction argument on the number of subgroups, we obtain the following extension of Theorem \ref{co-groups}.
\begin{theorem}\label{co-groups1}
Let $G$ be a group and let $H_1, H_2, \ldots, H_n$ be  normal subgroups of $G$ such that $(|H_i|,|H_j|) = 1$ when $i\neq j$. If $G$ is the internal direct product of subgroups $H_1, H_2, \ldots, H_n$, then for an element $x = x_1 x_2\ldots x_n$ of the group $G$ where $x_i\in H_i$,
\begin{enumerate}
\item[\rm(i)] $d_G^+(x)=\Big(\displaystyle\prod_{i=1}^{n}\big(d_{H_i}^+(x_i)+1\big)\Big)-1$.
\item[\rm(ii)]$d_G^{\pm}(x)=\Big(\displaystyle\prod_{i=1}^{n}\big(d_{H_i}^{\pm}(x_i)+1\big)\Big)-1$.
\item[\rm(iii)]$d_G^-(x)=\Big(\displaystyle\prod_{i=1}^{n}\big(d_{H_i}^-(x_i)+1\big)\Big)-1$.
\end{enumerate}
\end{theorem}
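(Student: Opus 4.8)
The plan is to argue by induction on the number $n$ of direct factors, using Theorem~\ref{co-groups} as the engine that reduces the $n$-factor case to the two-factor case. The three identities (i)--(iii) all share the same algebraic shape: each asserts that the quantity $d_G^{\star}(x)+1$, for a fixed superscript $\star \in \{+, \pm, -\}$, is multiplicative across an internal direct product whose factors have coprime orders. Since Theorem~\ref{co-groups} establishes exactly this multiplicativity for two coprime factors and for all three superscripts at once, I would run a single induction that treats the three cases uniformly; writing $d^{\star}$ for any one of them, the goal becomes $d_G^{\star}(x)+1 = \prod_{i=1}^{n}\big(d_{H_i}^{\star}(x_i)+1\big)$.

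For the base case $n=1$ there is nothing to prove, since $G = H_1$ and $x = x_1$; the case $n=2$ is precisely Theorem~\ref{co-groups}. For the inductive step, assume the identity holds for any internal direct product of $n-1$ normal subgroups with pairwise coprime orders, and let $G = H_1 \times \cdots \times H_n$. Set $K = H_1 \times \cdots \times H_{n-1}$. First I would check that $K$ is a normal subgroup of $G$ and that $G$ is the internal direct product of $K$ and $H_n$; this is the standard associativity of internal direct products. Next, since $(|H_i|, |H_n|) = 1$ for every $i < n$ and $|K| = |H_1| \cdots |H_{n-1}|$, the elementary number-theoretic fact that a product of integers each coprime to $m$ is again coprime to $m$ gives $(|K|, |H_n|) = 1$, which is exactly the hypothesis that Theorem~\ref{co-groups} requires.

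With these two facts in hand, write $x = w\,x_n$ where $w = x_1 \cdots x_{n-1} \in K$; this decomposition is legitimate because every element of an internal direct product is uniquely a product of components drawn from the factors. Applying Theorem~\ref{co-groups} to the two-factor decomposition $G = K \times H_n$ yields $d_G^{\star}(x) + 1 = \big(d_K^{\star}(w)+1\big)\big(d_{H_n}^{\star}(x_n)+1\big)$, and the induction hypothesis applied to $K$ gives $d_K^{\star}(w)+1 = \prod_{i=1}^{n-1}\big(d_{H_i}^{\star}(x_i)+1\big)$. Substituting the latter into the former combines to give $\prod_{i=1}^{n}\big(d_{H_i}^{\star}(x_i)+1\big)$, and subtracting $1$ recovers the claimed formula; running this with $\star = +$, $\pm$, $-$ produces (i), (ii), (iii) respectively. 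I expect the only genuine (though routine) obstacle to lie in the bookkeeping of the inductive step: one must verify carefully that regrouping the first $n-1$ factors really yields a normal subgroup $K$ whose order is coprime to $|H_n|$, so that Theorem~\ref{co-groups} may legitimately be invoked on the pair $(K, H_n)$.
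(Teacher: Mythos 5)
Your proposal is correct and is precisely the induction on the number of factors that the paper itself invokes (the paper states Theorem~\ref{co-groups1} as following from Theorem~\ref{co-groups} ``and an induction argument on the number of subgroups'' without writing out the details). Your write-up simply makes explicit the routine steps the paper leaves implicit, namely that $K = H_1\cdots H_{n-1}$ is normal with $(|K|,|H_n|)=1$, so the two approaches coincide.
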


Suppose $G$ is an abelian group of order $p_1^{m_1} p_2^{m_2}\ldots p_k^{m_k}$, where $p_i$'s are distinct primes. Then obviously the orders of normal subgroups $G(p_i)$ and $G(p_j)$ of group $G$ are relatively prime when $i\neq j$. As a direct consequence of Theorem \ref{idp-gp} and Theorem \ref{co-groups1}, we thus obtain the following straightforward Theorem \ref{deg-abe}.

\begin{theorem}\label{deg-abe}
Let $G$ be an abelian group of order $n$ and let $p_1^{m_1} p_2^{m_2}\ldots p_k^{m_k}$ be the prime decomposition of $n$. Then for a non-identity element $x = x_1 x_2 \ldots x_k$ of the abelian group $G$ where $x_i \in G(p_i)$,
\begin{enumerate}
\item[\rm(i)] $d_G^+(x)=\Big(\displaystyle\prod_{i=1}^{k}\big(d_{G(p_i)}^+(x_i)+1\big)\Big)-1$.
\item[\rm(ii)]$d_G^{\pm}(x)=\Big(\displaystyle\prod_{i=1}^{k}\big(d_{G(p_i)}^{\pm}(x_i)+1\big)\Big)-1$.
\item[\rm(iii)]$d_G^-(x)=\Big(\displaystyle\prod_{i=1}^{k}\big(d_{G(p_i)}^-(x_i)+1\big)\Big)-1$.
\end{enumerate}
\end{theorem}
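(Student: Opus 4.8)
The plan is to obtain Theorem~\ref{deg-abe} as the specialization of Theorem~\ref{co-groups1} to the canonical primary decomposition of $G$ supplied by Theorem~\ref{idp-gp}; essentially no new work beyond assembling these two results is required.

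First I would invoke Theorem~\ref{idp-gp}, which expresses the abelian group $G$ of order $n = p_1^{m_1} p_2^{m_2}\cdots p_k^{m_k}$ as the internal direct product of its normal subgroups $G(p_i)$, where $|G(p_i)| = p_i^{m_i}$. Because the $p_i$ are pairwise distinct primes, the orders satisfy $(|G(p_i)|, |G(p_j)|) = (p_i^{m_i}, p_j^{m_j}) = 1$ whenever $i \neq j$, so the pairwise-coprimality hypothesis of Theorem~\ref{co-groups1} is met. This coprimality check is the only hypothesis that needs verifying, and it follows at once from the distinctness of the primes in the prime decomposition of $n$.

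Next I would set $H_i = G(p_i)$ and apply Theorem~\ref{co-groups1} directly. The internal direct product structure guarantees that every element $x \in G$ admits the unique factorization $x = x_1 x_2 \cdots x_k$ with $x_i \in G(p_i)$ that appears in the statement, so the three product formulas (i), (ii) and (iii) are read off immediately from the corresponding parts of Theorem~\ref{co-groups1} with $n$ replaced by $k$.

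I do not expect a genuine obstacle here: the substance of the argument lives entirely in Theorem~\ref{co-groups} and its inductive extension Theorem~\ref{co-groups1}, where the multiplicativity of the out-degree, the bi-directional edge count and the in-degree across a coprime internal direct product is established. The role of Theorem~\ref{deg-abe} is only to record this multiplicativity for the specific decomposition into primary components $G(p_i)$, which is precisely the decomposition on whose factors the in-degree can afterwards be evaluated through Theorem~\ref{abe-p-grp}.
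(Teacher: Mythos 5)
Your proposal is correct and matches the paper's own argument exactly: the paper likewise derives Theorem~\ref{deg-abe} by combining Theorem~\ref{idp-gp} with Theorem~\ref{co-groups1}, noting only that the orders $p_i^{m_i}$ of the primary components $G(p_i)$ are pairwise coprime. There is nothing to add.
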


\section{Conclusion}
The results reported in the present paper show that the indegree of a group element is only important component in determining the degree of the group element in the power graph. We have obtained a precise formula to count the degree of a vertex in the power graph of an abelian group of prime-power order. By using the degree formula, we presented an alternative proof to show that the power graph of a cyclic group of prime-power order is complete. By using the fundamental theorem of finite abelian groups, we finally determined the degree of a vertex in the power graph of a finite abelian group.

\end{document}